\makeatletter \@addtoreset{equation}{section} \makeatother
\renewcommand\thetable{\thesection.\@arabic\c@table}
\theoremstyle{plain}
\newtheorem{maintheorem}{Theorem}
\newtheorem{theorem}{Theorem }[section]
\newtheorem{corollary}[theorem]{Corollary}
\theoremstyle{definition} \theoremstyle{remark}
\newcommand{\supp}{\operatorname{supp}}
\newcommand{\de} {\delta}       
\newcommand{\vep}{\varepsilon}
\renewcommand{\epsilon}{\varepsilon}
\newcommand{\cU}{\mathcal{U}}
\newcommand{\cS}{\mathcal{S}}
\begin{document}
\large

\title[Trivial and simple spectrum for $SL(2,\mathbb{R})$ cocycles with free base and fiber dynamics]{Trivial 
and simple spectrum for $SL(2,\mathbb{R})$ cocycles with free base and fiber dynamics }
\author{M\'ario Bessa and Paulo Varandas}

\address{M\'ario Bessa, Departamento de Matem\'atica, Universidade da Beira Interior, Rua Marqu\^es d'\'Avila e Bolama,
  6201-001 Covilh\~a,
Portugal.}
\email{bessa@ubi.pt}

\address{Paulo Varandas, Departamento de Matem\'atica, Universidade Federal da Bahia\\
Av. Ademar de Barros s/n, 40170-110 Salvador, Brazil.}
\email{paulo.varandas@ufba.br}

\date{\today}

\maketitle

\begin{abstract}

Let $AC_D(M,SL(2,\mathbb R))$ denote the pairs $(f,A)$ so that $f\in \mathcal A\subset \text{Diff}^{1}(M)$ is a $C^{1}$-Anosov transitive diffeomorphisms and $A$ is an $SL(2,\mathbb R)$ cocycle dominated with respect to $f$. We prove that open and densely 
in $AC_D(M,SL(2,\mathbb R))$ (in appropriate topologies) the pair $(f,A)$ has simple spectrum with 
respect to the unique maximal entropy measure $\mu_f$.
On the other hand, there exists a residual subset $\mathcal{R}\subset \text{Aut}_{Leb}(M)\times L^\infty(M,SL(2,\mathbb R))$, with respect to the separate topology, such that any element $(f,A)$ in $\mathcal{R}$ has trivial spectrum or it is hyperbolic. 
Then, we prove prevalence of trivial spectrum near the dynamical cocycle of an area-preserving map and also for generic cocycles in $\text{Aut}_{Leb}(M)\times L^p(M,SL(2,\mathbb R))$.
\end{abstract}

\section{Introduction}

Let $M$ denote a compact, Riemannian $d$-dimensional manifold  ($d\geq 2$) endowed with a distance $d(\cdot,\cdot)$ inherited from the metric, $Leb$ the volume-measure associated to the volume form on $M$, $\text{Aut}_{Leb}(M)$ the space of the automorphisms of $M$ (i.e. bi-measurable bijections preserving ${Leb}$) and $\text{Homeo}_{Leb}(M)$ the space of homeomorphism 
$f\colon M\rightarrow M$ which leave invariant the measure ${Leb}$, i.e. for all borelians $\mathscr{B}$ we have ${Leb}(\mathscr{B})={Leb}(f^{-1}(\mathscr{B}))$. We endow $\text{Aut}_{Leb}(M)$  with the \emph{weak topology} denoted by $\mathcal{W}$ (i.e. $f_n\rightarrow f$ if and only if ${Leb}(f_n(K)\bigtriangleup f(K))\rightarrow 0$ for all measurable sets $K\subset M$) and endow $\text{Homeo}_{Leb}(M)$ with the $C^0$ topology, given by the metric 
$$d(f, g) := \sup\{d(f(x), g(x)),d(f^{-1}(x), g^{-1}(x)) \colon x \in M\},$$
where $f$ and $g$ are in $\text{Homeo}_{Leb}(M)$.

Let $A\colon M\rightarrow SL(2,\mathbb R)$ be a continuous map when we consider the distance $d$ in $M$ and the uniform operator norm on the  special linear group $SL(2,\mathbb R)$. 
Let $r\in \mathbb N_0$ and $\nu \ge 0$ be such that $r+\nu>0$ and
 let $C^{r+\nu}(M,SL(2,\mathbb R))$  denote the space of $C^r$ cocycles $A\colon M\rightarrow SL(2,\mathbb R)$
 such that $D^r A$ is a $C^{\nu}$-H\"older continuous. Let $L^\infty(M,SL(2,\mathbb R))$ denote the space of all \emph{essentially bounded} maps 
$A\colon M\rightarrow SL(2,\mathbb R)$ endowed with the $L^{\infty}$-norm  defined by 
$$\|A-B\|_{\infty}=\underset{x\in M}{\text{esssup}}\|A(x)-B(x)\| 
	+\underset{x\in M}{\text{esssup}}\|A^{-1}(x)-B^{-1}(x)\|.
	$$  Given $A\in L^\infty(M,SL(2,\mathbb R))$ 
we will denote, by a slight abuse of notation, by \emph{cocycle} the skew-product
\begin{equation*}
\begin{array}{cccc}
F_A \colon  & M\times\mathbb R^{2} & \longrightarrow  & M\times\mathbb R^{2} \\
& (x,v) & \mapsto  & (f(x), A(x)\cdot v),
\end{array}
\end{equation*}
whose joint base and fiber dynamics is given by $F^n_A(x,v)=(f^n(x),A^n(x)\cdot v)$ where 
$$A^{n}(x)=A(f^{n-1}(x)) \circ \dots \circ A(f(x))\circ A(x).$$

In the present paper we intend to study both the dynamical cocycle associated to the derivative of the dynamics as other class of cocycles over a base dynamics $f$ and fiber dynamics $A$. In other words, we ask what is the typical behavior of the deterministic product, determined by $f$, of the product of $SL(2,\mathbb R)$ matrices when we allow changes in both $f$ and $A$.

If $\mu$ is an $f$-invariant probability measure such that $\log\|A^\pm\|\in L^1(\mu)$, then it follows from
Oseledets theorem (see e.g. ~\cite{BP2}) that, for $\mu$-almost every $x$, there exists the \emph{largest
Lyapunov exponent} defined by the limit
$$
\lambda^{+}(f,A,x)=\underset{n\rightarrow{+\infty}}{\lim}\frac{1}{n}\log\|A^{n}(x)\|,
$$
satisfying $\lambda^{+}(f,A,f(x))=\lambda^{+}(f,A,x)$ and it is a non-negative measurable function of $x$. 
Moreover, since we are dealing with elements in $SL(2,\mathbb R)$, for $\mu$-a.e. point $x\in M$ with $\lambda(f,A,x)\not=0$, there is an $A$-invariant splitting of 
the bundle over $x$, $E_{x}^{u}\oplus{E_{x}^{s}}$ which varies measurably with $x$ and such that, if $u\in{E_{x}^{u}}\setminus \{\vec 0\}$ and $s\in{E_{x}^{s}}\setminus \{\vec 0\}$, then

$$
\underset{n\rightarrow{\pm\infty}}{\lim}\frac{1}{n}\log\|A^{n}(x) \cdot u\|=\lambda^{+}(f,A,x)\,\,\,\,\,\, \text{and} \,\,\,\,\,\,\underset{n\rightarrow{\pm\infty}}{\lim}\frac{1}{n}\log\|A^{n}(x)\cdot s\|=\lambda^{-}(f,A,x)=-\lambda^{+}(f,A,x).
$$

If the Lyapunov exponents are all different from each other we say that the spectrum is \emph{simple}. Moreover,
we say that the Lyapunov spectrum is \emph{trivial} if all the Lyapunov exponents are equal, i.e. the Lyapunov spectrum reduces to a single point.

\section{Simple Spectrum}
 
From now on we will consider more regular cocycles with a fiber-bunching assumption, called \emph{domination}, 
over uniformly hyperbolic dynamical systems as we will now describe. We say that  $f\in \text{Diff}^{1}(M)$
is an \emph{Anosov diffeomorphism} if there are constants $C=C_f>0$ and $\theta=\theta_f\in (0,1)$, and a $Df$-invariant splitting $TM=E_f^s\oplus E_f^u$ of the tangent bundle such that $\|Df^n|{E^s}\|\le C \theta^n$
and $\|(Df^n|{E^u})^{-1}\|\le C \theta^n$ for all $n\ge 1$.  We refer to this splitting as an \emph{hyperbolic splitting}. We say that $f$ is \emph{transitive} if it displays a dense orbit, i.e., there exists $x\in M$ such that $\overline{\cup_{n\in\mathbb{Z}}f^n(x)}=M$.
Let us denote by $\mathcal A\subset \text{Diff}^{1}(M)$ the space of $C^{1}$-Anosov diffeomorphisms on $M$ that are transitive, endowed with the $C^1$ Whitney topology, whose norm we denote by $\|\cdot\|_1$.
In fact it is well known that $\mathcal A$ is an open set and that the constants $C>0$ and $\theta\in (0,1)$ can be chosen in a way that for all $g\in\mathcal A$ sufficiently close to $f$ there exists a $Dg$-invariant splitting $TM=E_g^s\oplus E_g^u$ such that $\|Dg^n|{E_g^s}\|\le C \theta^n$ and $\|(Dg^n|{E_g^u})^{-1}\|\le C \theta^n$ for all $n\ge 1$.

We say that a diffeomorphism $f$ is $C^r$-\emph{structurally stable} if there exists a $C^r$-neighborhood of $f$ on which for any element $g$, there exists a homeomorphism $h=h_g$ such that $g\circ h=h\circ f$. Given $\gamma>0$, the homeomorphism $h\colon M\rightarrow M$ is said to be \emph{$\gamma$-H\"older continuous} if there exist $C>0$ such that 
$
d(h(x),h(y))\leq C\,d(x,y)^\gamma
$
for all $x,y\in M$ .
Since Anosov diffeomorphisms are structurally stable, for any $f\in\mathcal A$ there exists $\gamma\in (0,1)$ and a $\gamma$-H\"older continuous homeomorphism $h_g$ close to the identity and such that $g\circ h_g = h_g\circ f$. 
Let $\eta: \mathcal A \to (0,1]$ be a continuous function such that any $f\in\mathcal A$ is $\eta(f)$-H\"older
conjugate to all sufficiently close maps (see e.g.~\cite{KH} for details on regularity of conjugacies).
As a simple consequence, it follows from the structural stability that the set $\mathcal A$ of 
transitive Anosov diffeomorphisms forms an open set in the set of all Anosov diffeomorphisms.

Similarly to before, we can define a hyperbolic cocycle $F_A$ by swapping $Df$ by $A$. In other words, a cocycle
is \emph{hyperbolic} if it admits an $A$-invariant hyperbolic splitting. Hyperbolicity is an open property and it is well-known that the original metric can be changed in order to obtain contant $C$ equal to one. 
We refer the reader to~\cite{KH} for a detailed account on hyperbolicity.   
This put us in a position to recall the notion of domination for cocycles, that roughly means that the cocycle has some partial hyperbolicity in a sense that it behaves like a partial hyperbolic dynamical system whose central direction dynamics is given by the fiber. More precisely, following \cite[Definition~1.1]{BoV04}, given a constant $\nu>0$, 
we say that a $\nu$-H\"older continuous cocycle $A$ is \emph{dominated} for $f$ with hyperbolicity rate $\theta$,  if it satisfies  
$$
\|A(x)\|\,\|A(x)^{-1}\|\, \theta^\nu<1 \text{ for all } x\in M.
$$  
In fact the later is an open condition in the space $C^0(M,SL(2,\mathbb R))$ of continuous cocycles. Therefore, of any $\nu>0$, this is also an open condition in the space $C^\nu(M,SL(2,\mathbb R))$ of $\nu$-H\"older continuous  cocycles endowed with the usual $C^\nu$ norm $\| \cdot \|_\nu= \| \cdot \|_0+| \cdot |_\nu$, where
\begin{equation}
|A|_{\nu}
	=\underset{x\not=y}{\sup}\frac{\|A(x)-A(y)\|}{d(x,y)^\nu}.
\end{equation}

When $\nu=1$ this corresponds to the Lipschitz norm. We denote by $Lip$ the topology induced by the norm $\|\cdot\|_\nu$ with $\nu=1$.

It follows from the thermodynamical formalism for uniformly hyperbolic maps (see e.g.~\cite{Bowen}) that for every $f\in\mathcal{A}$ there exists a unique maximal entropy measure $\mu_f$, and that it has local product structure as we now describe.
 Recall that local stable and local unstable manifolds are $C^1$-embedded submanifolds of $M$ with
 the property that  $W_{\text{loc}}^s(x)$ and $W_{\text{loc}}^u(x)$ vary continuously with $x$ and
there exists $\de>0$ is  all such that for any $x\in M$ and $y,z\in B(x,\delta)$  the intersection 
$$
[y,z]:=W^u_{\text{loc}}(y)\pitchfork W^s_{\text{loc}}(z) \neq \emptyset
$$
 consists of a unique point.
Hence, there exists $N_x^u(\delta)\subset W^u_{\text{loc}}(x)$ a $u$-neighborhood of $x$
and $N_x^s(\delta)\subset W^s_{\text{loc}}(x)$ an $s$-neighborhood of $x$ and a neighborhood 
$N_\de(x)$ of $x$ in $M$ such that the map $\Upsilon_x : N_\de(x) \to  N_x^u(\delta) \times N_x^s(\delta)$ given
by $\Upsilon_x(y) = ( [x,y] , [y,x]  )$ is a homeomorphism. 

An $f$-invariant probability measure $\eta$ has \emph{local product structure} if for any $x\in \supp(\eta)$ (where $\supp(\eta)$ stands for the support of the measure $\eta$) and a small $\de>0$ the measure $\eta\!\mid_{N_x(\de)}$ is equivalent to the  product measure $\eta^u_x \times \eta^s_x$, where $\eta_x^{i}$ denotes the conditional measure of $(\Upsilon_x)_*(\eta\!\mid_{N_x(\de)})$ on  $N^{i}_x(\de)$, for $i\in{u,s}$.   See e.g.~\cite{Bowen} for details. We study Lyapunov exponents with respect to the maximal entropy measure given by
 
 \begin{equation*}
\begin{array}{cccc}
\Lambda \colon  &   AC_D(M,SL(2,\mathbb R)) & \longrightarrow  & [0,+\infty[\\
& (f,A) & \mapsto  & \int_M \lambda^+(f,A,x) \, d\mu_f
\end{array}
\end{equation*}
Let $AC_D(M,SL(2,\mathbb R))\subset \mathcal A \times  C^{1}(M,SL(2,\mathbb R))$ to 
be the set of pairs $(f,A)$ such that $f$ is a transitive Anosov diffeomorphism and $A$ is a $C^1$-cocycle, dominated for $f$. We show that most cocycles in $AC_D(M,SL(2,\mathbb R))$, in the sense of  Theorem~\ref{thm:simple} stated below, have simple spectrum. 
Let us give a brief description of the strategy of the proof. Using that the base dynamics $f$ is Anosov, hence structurally stable, it follows that any diffeomorphism $g$ close to $f$ there exists an H\"older continuous homeomorphism $h_g$ such that $h_g\circ g=f\circ h_g$. Since the maximal entropy measure is preserved by the H\"older continuous conjugation first we can make a reduction and assume that the dynamics $f_0$ is fixed in the Lyapunov exponent function above and one considers cocycles of the form $A\circ h_g$, where $A$ is $C^1$-smooth. The smoothness of the original cocycle plays a key role to guarantee that all close cocycles obtained by conjugation are H\"older continuous with the same regularity.

Let $f\colon M\rightarrow M$ and $g\colon N\rightarrow N$ be invertible measurable maps and measurable $h$-conjugated, say $g\circ h=h\circ f$, for an invertible measurable map $h\colon M\rightarrow N$. The cocycle $A$ over $f$ and the cocycle $B$ over $g$ are \emph{equivalent} if there exists a measurable temperated map $L\colon M\rightarrow SL(2,\mathbb R)$ such that the cohomology equations holds: $A(h^{-1}(x))=L(g(x))^{-1}B(x)L(x)$ for $x\in N$ (cf. Chapter 4 in \cite{BP2}).  We are now in a position to state our first result.

 \begin{maintheorem}\label{thm:simple}
For any $(f,A)\in AC_D(M,SL(2,\mathbb R))$ and $\vep>0$ there exists $(g, B)
 \in AC_D(M,SL(2,\mathbb R))$ such that $\|f-g\|_1<\vep$, $\|A-B\|_{\eta(f)}<\vep$
 and the pair $(g, B)$ has simple spectrum with respect to $\mu_{g}$. 
 Moreover, given $(f,A)\in AC_D(M,SL(2,\mathbb R))$ with simple spectrum with respect to $\mu_f$ there exists an open 
 neighborhood $\cU$ in the $C^1\times Lip$-topology in $AC_D(M,SL(2,\mathbb R))$ such that if 
 $(g,B) \in \mathcal U$, then the pair has simple spectrum with respect  to the maximal entropy measure $\mu_{g}$.
\end{maintheorem}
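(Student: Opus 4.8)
The plan is to reduce both assertions to the analogous statements over a \emph{fixed} base, where positivity and continuity of the top Lyapunov exponent for dominated $SL(2,\mathbb R)$ cocycles are available.

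\textbf{Reduction to a fixed base.} Keep the base $f$ of the given pair $(f,A)$ as reference. By structural stability and the choice of the function $\eta$, every $g\in\mathcal A$ that is $C^1$-close to $f$ is $\eta(f)$-H\"older conjugate to $f$ by a homeomorphism $h_g$, close to the identity, depending continuously (in the $C^0$ topology) on $g$, with $h_g\circ g=f\circ h_g$ and $h_f=\mathrm{id}$. Since $f$ and $g$ are transitive Anosov, each carries a unique measure of maximal entropy, and topological conjugacy preserves entropy and the maximizing property, so $(h_g)_*\mu_g=\mu_f$. Hence, for a cocycle $B$ over $g$, the pulled-back cocycle $\tilde B:=B\circ h_g^{-1}$ over $f$ has the same Oseledets data transported along the conjugacy, so $\lambda^{\pm}(g,B,\mu_g)=\lambda^{\pm}(f,\tilde B,\mu_f)$ and $(g,B)$ has simple spectrum with respect to $\mu_g$ if and only if $\tilde B$ has simple spectrum with respect to $\mu_f$. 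Thus it suffices to study $B\mapsto\Lambda(f,B)$ on cocycles over the fixed map $f$; three bookkeeping points must be checked: $\tilde B$ is only $\eta(f)$-H\"older (composition of a $C^1$, resp. Lipschitz, cocycle with an $\eta(f)$-H\"older homeomorphism), so the fixed-base arguments must run in the H\"older category; $\tilde B$ stays dominated for $f$ (the condition $\|\tilde B\|\,\|\tilde B^{-1}\|\,\theta^{\eta(f)}<1$ is $C^0$-open and the hyperbolicity rate can be fixed uniformly near $f$); and $C^1$-closeness of $g$ to $f$ together with $Lip$- or $\eta(f)$-closeness of $B$ to $A$ yields $\eta(f)$-closeness (with a controlled loss) of $\tilde B$ to $A$, which is all the fixed-base arguments require.

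\textbf{Density.} Here we may keep $g=f$ and perturb only the cocycle. If $\lambda^+(f,A,\mu_f)>0$ there is nothing to do. Otherwise $F_A$ is not a hyperbolic cocycle and, by the structure theory of dominated $SL(2,\mathbb R)$-cocycles over transitive Anosov maps carrying an equilibrium state of full support and local product structure (invariant holonomies and conformal structures), $A$ admits a continuous invariant conformal structure or a continuous invariant line field over $M$; these are closed and nowhere dense among dominated cocycles, and a dominated cocycle outside them has positive top exponent at $\mu_f$. A perturbation supported on a small neighbourhood of the orbit of a periodic point $p$ of $f$ — which exists densely by transitivity and which keeps the cocycle dominated, domination being $C^0$-open — can be used to destroy these invariant structures (for instance by prescribing along $\mathcal O(p)$ a return matrix realizing the required hyperbolicity and twisting); a subsequent smoothing in the $\eta(f)$-norm, positivity of the top exponent being $C^{\eta(f)}$-open, produces a $C^1$ dominated cocycle $B$ with $\|A-B\|_{\eta(f)}<\vep$ and $\lambda^+(f,B,\mu_f)>0$. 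Then $(f,B)\in AC_D(M,SL(2,\mathbb R))$ has simple spectrum with respect to $\mu_f$, and by the Reduction step the corresponding conclusion transfers to $AC_D(M,SL(2,\mathbb R))$.

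\textbf{Openness.} Over the fixed transitive Anosov map $f$, the function $B\mapsto\lambda^+(f,B,\mu_f)$ is continuous on the $C^0$-open set of dominated cocycles (a known continuity result for dominated cocycles over hyperbolic systems with an equilibrium state of local product structure); moreover, when $\lambda^+(f,B,\mu_f)>0$ the Oseledets bundles extend to a continuous invariant splitting over $\supp\mu_f=M$, which for $SL(2,\mathbb R)$ exhibits $F_B$ as a hyperbolic cocycle, and hyperbolicity of cocycles is an open property as recalled above. Either way, $\{B:\lambda^+(f,B,\mu_f)>0\}$ is $C^0$-open; pulling this set back through the conjugacies $h_g$ as in the Reduction step produces a $C^1\times Lip$-neighbourhood $\cU$ of $(f,A)$ in $AC_D(M,SL(2,\mathbb R))$ such that every $(g,B)\in\cU$ satisfies $\lambda^+(g,B,\mu_g)=\lambda^+(f,\tilde B,\mu_f)>0$, i.e. has simple spectrum with respect to $\mu_g$.

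\textbf{Main difficulty.} The delicate point is the reduction: one must verify that the conjugacies $h_g$ interact cleanly with the product topology — controlling the H\"older exponent $\eta(f)$ and the domination constants uniformly as $g$ ranges over a $C^1$-neighbourhood of $f$ — so that an honest neighbourhood in $\mathcal A\times C^1(M,SL(2,\mathbb R))$ is carried into the region of the fixed-base cocycle space governed by the positivity and continuity statements. The fixed-base inputs used in the Density and Openness steps (density and continuity of the positive top exponent for dominated $SL(2,\mathbb R)$-cocycles over hyperbolic bases) are established features of the theory and are not reproved here.
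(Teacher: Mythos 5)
Your overall architecture coincides with the paper's: fix the base $f$, use structural stability to conjugate nearby $g$ to $f$ by an $\eta(f)$-H\"older homeomorphism $h_g$, observe that $(h_g)_*\mu_g$ is the maximal entropy measure of $f$ and that $\tilde B=B\circ h_g^{-1}$ has the same exponents over $f$ as $B$ over $g$, and then invoke the fixed-base theory of dominated H\"older cocycles. Your reduction and density steps are essentially the paper's (the paper simply cites \cite{BoV04,Viana} for an open \emph{and dense} subset $\mathcal S_f$ of the $C^{\eta(f)}$-dominated cocycles with simple spectrum, and uses $C^{\eta(f)}$-density of $C^1$ cocycles; your sketch of destroying invariant conformal structures at periodic orbits is a compressed account of the proof of that cited result, which is acceptable since you flag it as an established input).

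The genuine gap is in your openness step. Neither of the two justifications you offer for the $C^0$-openness of $\{B:\lambda^+(f,B,\mu_f)>0\}$ holds up. First, continuity of $B\mapsto\lambda^+(f,B,\mu_f)$ on dominated cocycles in the $C^0$ topology is not among the results available here (the continuity statements for fiber-bunched cocycles are formulated in H\"older topologies and are considerably more delicate; \cite{BoV04,Viana} only give that the simple-spectrum set \emph{contains} a $C^{\eta(f)}$-open set). Second, the claim that $\lambda^+(f,B,\mu_f)>0$ forces the Oseledets splitting to extend to a continuous invariant splitting, exhibiting $F_B$ as a uniformly hyperbolic cocycle, is false: positivity of the exponent with respect to the single measure $\mu_f$ says nothing about the behaviour over other invariant measures (e.g.\ the cocycle may be elliptic over some periodic orbit), and uniform hyperbolicity of the cocycle requires much more; were your claim true, combining it with the density part would make uniform hyperbolicity dense among dominated cocycles, which it is not. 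The correct route — the one the paper takes — is to use directly the $C^{\eta(f)}$-\emph{openness} of the simple-spectrum set $\mathcal S_f$ from \cite{BoV04,Viana}: choose $\delta$ with $B(A,5\delta)\subset\mathcal V\subset\mathcal S_f$ in the $C^{\eta(f)}$-norm, and then verify by an explicit estimate (using $\|h_g\|_{\eta(f)}\le 2$, the Lipschitz bound $\|A-B\|_1<\delta$, and $\|A\circ h_g-A\|_{\eta(f)}<\delta$ for $g$ close to $f$) that $\|B\circ h_g-A\|_{\eta(f)}<5\delta$, so that $B\circ h_g\in\mathcal V$ and hence $(g,B)$ has simple spectrum with respect to $\mu_g$. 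You allude to exactly this bookkeeping in your reduction paragraph, so the fix is to discard the $C^0$-continuity/hyperbolicity argument and carry out that H\"older estimate instead.
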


\begin{proof}
For simplicity reasons, when no confusion is possible, whenever we will say that the pair $(f,A)$  has simple spectrum
we will omit the maximal entropy measure $\mu_f$.

First, let $(f,A)\in AC_D(M,SL(2,\mathbb R))$ and $\vep>0$ be given arbitrary. Using that domination is an open condition, up to consider a smaller value of $\vep$ we may assume without loss of generality that all cocycles 
$C^0$-close to $A$ are dominated with respect to all $g$ that are $C^1$ close to $f$.
Now, since $\mu_f$ has local product structure and the cocycle $A$ is $C^1$ (hence in particular $C^{\eta(f)}$-H\"older continuous) it follows from  \cite{BoV04,Viana} that there exists an open and dense subset $\mathcal S_f$ 
in the set of $C^{\eta(f)}$-H\"older continuous and dominated cocycles for $f$ such that  
$(f,\tilde B)$ has simple spectrum for every $\tilde B\in \cS_f$. 
Since $C^1$-smooth cocycles are $C^{\eta(f)}$-dense in the space of H\"older continuous cocycles, then 
the first assertion will follow by taking $g=f$ and $B\in C^{1}(M, SL(2,\mathbb R))$ in $\cS_f$ with 
$\|A-B\|_{\eta(f)}<\vep$.

Now, we proceed to prove the second assertion in the theorem. 
Assume that the pair $(f,A)\in AC_D(M,SL(2,\mathbb R))$ has simple spectrum. Using again 
\cite{BoV04,Viana} as above, there exists a $C^{\eta(f)}$-open neighborhood $\mathcal V$ of $A$ 
in the space of $C^{\eta(f)}$ cocycles  such that $\mathcal V\subset \mathcal{S}_{f}$. 

In particular, $(f,B)$ has simple spectrum with respect to $\mu_f$ for every $B \in \mathcal V$.
Let $\delta>0$ be such that $B(A,5\delta) \subset \mathcal V$, where $B(A,5\delta)$ stands for the ball, in the $C^{\eta(f)}$-topology centered in $A$ and with radius $5\delta$. 
Let us consider then a $C^1$-neighborhood $\cU_1$ of $f$ so that: 
\begin{itemize}
\item[(i)] for any $g\in \cU_1$ there exists  an $\eta(f)$-H\"older continuous homeomorphism $h_g$ which is close 
to the identity and $g\circ h_g = h_g\circ f$, and 
\item[(ii)] for any $g\in\cU_1$ and $B\in \mathcal V$ the pair $(g,B)$ is dominated.
\end{itemize}
Using that the cocycle $A$ is $C^1$ smooth, then the map $\Xi: \cU_1 \to C^{\eta(f)}(M,SL(2,\mathbb R))$ given by 
$
\Xi: g \mapsto A\circ h_g 
$
is well defined and continuous, where the continuity follows from the fact that the conjugation $g\mapsto h_g$ is continuous. In fact, while the later is a local bijection between a neighborhood of $f$ onto a neighborhood of the identity $id_M$ one can reduce the neighborhood $\cU_1$ of the Anosov diffeomorphism $f$
if necessary so that $\Xi(\cU_1)\subset B(A,\delta)$ and $\|h_g\|_{\eta(f)}<2$ for all $g\in \cU_1$.

Now, for any $g\in \cU_1$ consider the $C^{\eta(f)}$-open set of cocycles $\mathcal V_g=\{ B\circ h_g^{-1} : B \in \mathcal V \}$.  Since there is a unique maximal entropy measure for both $f$ and $g$, then it is preserved by
topological conjugacy and so $(h_g^{-1})_*\mu_g=\mu_f$. Therefore, if $g\in\cU_1$ and $\tilde B=B\circ h_g^{-1}\in \mathcal V_g$, for some $B\in\mathcal V$, then we can use that $f^n= h_g^{-1}\circ g^n \circ h_g$ to check that the
cocycle $\tilde B$ under iteration over $g$ becomes
\begin{align*}
\tilde B^n(y) & = \tilde B(g^{n-1}( y )) \circ \dots \circ \tilde B( g(y) ) \circ \tilde B(y) \\
		& = B ( h_g^{-1}  (g^{n-1}( y ))) \circ \dots \circ  B( h_g^{-1} ( g(y)) ) \circ B(h_g^{-1}(y)) \\
		& = B (  f^{n-1}( h_g^{-1}(y) ) \circ \dots \circ B( f( h_g^{-1}(y))) \circ  B(h_g^{-1}(y)) \\
		& = B^n(h_g^{-1}(y))
\end{align*}
for all $y\in M$, where $B^n(h_g^{-1}(y))$ is obtained by iteration of $B$ over $f$. Thus, the Lyapunov exponents of the cocycle $(g,\tilde B)$ with respect to the maximal entropy 
measure $\mu_g$ are the same as the ones of the cocycle $(f,B)$ with respect to the maximal entropy 
measure $\mu_f$ and, consequently, the pair $(g,\tilde B)$ has simple spectrum.

Finally, we claim that any pair $(g,B)\in AC_D(M, SL(2,\mathbb R))$ such that $g\in\cU_1$ and $\|A-B\|_{1}<\delta$  has
simple spectrum. For that it is enough to show that 
$B\in \mathcal V_g$ or equivalently that $B\circ h_g\in \mathcal V$, since writing $B=(B\circ h_g)\circ h_g^{-1}$.
For any $x,y\in M$ we have
\begin{align*}\label{eq:C1holder}
\|(B\circ h_g-A)(x)-(B\circ h_g-A)(y)\|
		& \le 	 \|(B\circ h_g- A\circ h_g)(x)-(B\circ h_g-A\circ h_g)(y)\| \nonumber \\
		& + \|(A\circ h_g-A)(x)-(A\circ h_g-A)(y)\| \nonumber  \\
		& \le \|A-B\|_{1} \; d(h_g(x),h_g(y))  \\
		& + \|(A\circ h_g-A)\|_{\eta(f)} \; d(x,y)^{\eta(f)}\\
		& \le \delta\|h_g\|_{\eta(f)} \; d(x,y)^{\eta(f)}  + \delta \; d(x,y)^{\eta(f)}.
\end{align*}
Thus, the $|B\circ h_g-A|_{\eta(f)}<3\delta$. Similar estimates provide an upper bound for the $C^0$ norm of the cocycle $B\circ h_g-A$, hence proving that
$B\circ h_g \in B(A,5\delta)\subset \mathcal V$.
This finishes the proof of the theorem.

\end{proof}

We finish this section with some comments. First we mention that the proof of the density above can be taken
in the stronger $C^1 \times C^1$-topology by the results in \cite{BoV04,Viana}.

Secondly, since we use some results from \cite{BoV04,Viana} the previous result also holds from transitive Anosov diffeomorphisms to a basic piece of an Axiom A diffeomorphism. Moreover, we can replace $SL(2,\mathbb R)$ by $SL(d,\mathbb R)$ cocycles for $d\ge 2$ (see~\cite{BoV04}). We opted to state Theorem~\ref{thm:simple} as above for more simple comparison to the characterization of continuous $SL(2,\mathbb R)$ cocycles given in the next theorem.

\section{Trivial and simple spectrum}

In the recent years several results about the generic behavior of $SL(2,\mathbb R)$ cocycles have been proved. We refer to result of Cong ~\cite{C} where it is proved the $L^\infty$ genericity of hyperbolic behavior among \emph{bounded} cocycles when we keep the base dynamics and perturb the fiber dynamics in the $L^\infty$-sense. Moreover, Bochi and Fayad ~\cite{BF} proved that the trivial Lyapunov spectrum prevails in a $C^0$-generic way when we keep the fiber dynamics and allow perturbations in the base dynamics.

In this note we intend to continue the discussion on the problem of knowing what is the prevalent behavior when we consider 
two degrees of freedom, i.e., perturbation of both base and fiber dynamics. In what follows, we endow
the product space $\text{Aut}_{Leb}(M)\times L^\infty(M,SL(2,\mathbb R))$ with the separate topology in $\mathcal{W}\times L^{\infty}$. For full details on the issue of continuity of multivariable real maps see \cite{HW}.

The following result is a very simple consequence of \cite[Proposition 1.7]{BF} which we present in order to contextualize with the object of our study.

\begin{theorem}\label{teo1}(\cite[Proposition 1.7]{BF})
There exists a residual subset $\mathcal{R}\subset \text{Aut}_{Leb}(M)\times L^\infty(M,SL(2,\mathbb R))$ such that any element $(f,A)\in\mathcal{R}$ has trivial spectrum or else the cocycle $(f,A)$ is uniformly hyperbolic.
\end{theorem}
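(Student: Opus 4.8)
The plan is to derive the statement from \cite[Proposition~1.7]{BF} by a soft category argument, the only genuine care being with the separate topology. First I would reduce the problem. Since $\la^-(f,A,\cdot)=-\la^+(f,A,\cdot)\le 0\le\la^+(f,A,\cdot)$ holds $\Leb$-a.e., the pair $(f,A)$ has trivial spectrum precisely when
$$
\Lambda(f,A):=\int_M\la^+(f,A,x)\,d\Leb(x)=0 ,
$$
and, by the subadditive ergodic theorem applied to the subadditive family $k\mapsto\log\|A^k(\cdot)\|$ and integrated over $\Leb$ (no ergodicity is needed), $\Lambda(f,A)=\inf_{k\ge1}\tfrac1k\int_M\log\|A^k(x)\|\,d\Leb(x)$. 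Write $\mathcal H$ for the set of pairs $(f,A)$ for which the cocycle $F_A$ is uniformly hyperbolic; since uniform hyperbolicity forces uniform exponential growth of $\|A^n(\cdot)\|$, $(f,A)\in\mathcal H$ implies $\Lambda(f,A)>0$. For $k\ge1$ put
$$
\mathcal G_k:=\operatorname{int}(\mathcal H)\ \cup\ \bigl\{(f,A):\Lambda(f,A)<\tfrac1k\bigr\},
$$
the interior being taken in the product topology $\mathcal W\times L^\infty$. Then $\bigcap_{k\ge1}\mathcal G_k\subset\bigcap_{k\ge1}\bigl(\mathcal H\cup\{\Lambda<\tfrac1k\}\bigr)=\mathcal H\cup\{\Lambda=0\}$, and the right-hand side is exactly the set of pairs that are uniformly hyperbolic or have trivial spectrum. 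So it suffices to prove that each $\mathcal G_k$ is open and dense for the separate topology and then to take $\mathcal R=\bigcap_{k\ge1}\mathcal G_k$.

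Openness is soft. One checks that $(f,A)\mapsto\tfrac1k\int_M\log\|A^k\|\,d\Leb$ is jointly continuous on $\mathcal W\times L^\infty$: if $f_n\to f$ in $\mathcal W$ (that is, the Koopman operators converge strongly) and $A_n\to A$ in $L^\infty$, then $A_n\circ f_n^{\,i}\to A\circ f^{\,i}$ in $L^2$ for each $i$, with a uniform $L^\infty$-bound, so the $k$-fold products converge in $L^1$ and one finishes by dominated convergence. Hence $\Lambda=\inf_k(\cdot)$ is upper semicontinuous on $\mathcal W\times L^\infty$, so $\{\Lambda<\tfrac1k\}$ is open there, a fortiori in the finer separate topology; and $\operatorname{int}(\mathcal H)$ is open in $\mathcal W\times L^\infty$ by construction. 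A union of two separate-open sets being separate-open, each $\mathcal G_k$ is separate-open.

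For density, fix a nonempty separate-open set $\mathcal U$ and a point $(f_0,A_0)\in\mathcal U$. The section $\mathcal U_{A_0}=\{f:(f,A_0)\in\mathcal U\}$ is a nonempty $\mathcal W$-open subset of $\text{Aut}_{Leb}(M)$, so by \cite[Proposition~1.7]{BF} applied to the fixed cocycle $A_0$ the set of $f\in\mathcal U_{A_0}$ for which $(f,A_0)$ is uniformly hyperbolic or has trivial spectrum is residual in $\mathcal U_{A_0}$. If some such $f$ has trivial spectrum, then $(f,A_0)\in\mathcal U\cap\mathcal G_k$ and we are done. Otherwise $\mathcal H_{A_0}:=\{f:(f,A_0)\in\mathcal H\}$ is residual in $\mathcal U_{A_0}$. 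Now $\mathcal H_{A_0}$ is an $F_\sigma$ subset of $(\text{Aut}_{Leb}(M),\mathcal W)$: by the cone-field characterization of uniform hyperbolicity for $SL(2,\mathbb R)$-cocycles it is a countable union, over rational parameters, of sets cut out by finitely-many-step conditions of the type $\{f:\ \|A_0(f^{n-1}\cdot)\cdots A_0(\cdot)\|\ge Ce^{\la n}\ \Leb\text{-a.e. for all }n\}$ (and the analogous cone conditions), each of which is $\mathcal W$-closed because $f\mapsto A_0(f^{n-1}\cdot)\cdots A_0(\cdot)$ is $\mathcal W$-continuous into $L^2$ by the estimate used in the previous paragraph. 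Since an $F_\sigma$ set residual in the Baire space $\mathcal U_{A_0}$ must contain a nonempty open set, one of those closed pieces has a nonempty $\mathcal W$-interior $V'\subset\mathcal U_{A_0}$, on which the hyperbolicity estimates hold with \emph{uniform} constants. By the usual robustness of uniform hyperbolicity, applied with those uniform constants, there is an $L^\infty$-ball $\mathcal B\ni A_0$ with $V'\times\mathcal B\subset\mathcal H$; hence $V'\times\mathcal B\subset\operatorname{int}(\mathcal H)\subset\mathcal G_k$ and, since $V'\subset\mathcal U_{A_0}$, every $g\in V'$ gives $(g,A_0)\in\mathcal U\cap\mathcal G_k$. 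In either case $\mathcal G_k$ meets $\mathcal U$, so it is separate-dense; consequently $\mathcal R=\bigcap_{k\ge1}\mathcal G_k$ is residual and every pair in it is uniformly hyperbolic or has trivial spectrum.

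The step I expect to be the main obstacle --- and the reason one cannot just run the Bochi-type argument directly inside the product --- is that uniform hyperbolicity is \emph{not} an open condition for $\mathcal W$-perturbations of the base, so the naive candidate $\mathcal H\cup\{\Lambda<\tfrac1k\}$ need not be separate-open. Passing to $\operatorname{int}(\mathcal H)$ repairs this, but then its size has to be recovered, which is precisely the $F_\sigma$/Baire maneuver above; there it is essential to extract \emph{uniform} hyperbolicity rates on a $\mathcal W$-open set of base maps, for only then does a small perturbation of the cocycle stay inside $\mathcal H$. All of the genuine dynamics --- that off uniform hyperbolicity a weak perturbation of the base drives the top Lyapunov exponent to $0$ --- is contained in \cite[Proposition~1.7]{BF}; the remaining ingredients (joint semicontinuity of $\Lambda$, and Baire category for the separate topology, for which see \cite{HW}) are soft.
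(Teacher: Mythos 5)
Your argument is correct, but it is organized quite differently from the paper's. The paper does not build the residual set as an explicit intersection of open dense sets: it takes $\mathcal{R}$ to be the set of continuity points of the integrated exponent $\Lambda$, which is residual because $\Lambda$ is upper semicontinuous for the separate topology (separate upper semicontinuity is imported from \cite[Proposition A.2]{BF} in the base variable and from \cite[Proposition 2.1]{B} in the fiber variable, rather than proved from scratch as you do via Kingman plus joint continuity of the finite-step integrals --- your joint statement is in fact stronger and cleaner). The dichotomy at a continuity point is then obtained by perturbing only the \emph{fiber}: if $(f_0,A_0)$ is not uniformly hyperbolic and $\Lambda(f_0,A_0)>0$, Bochi's theorem \cite{B} produces $A_1$ with $\|A_1-A_0\|_\infty<\vep$ and $\Lambda(f_0,A_1)=0$, contradicting continuity. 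You instead perturb the \emph{base} using \cite[Proposition 1.7]{BF}, and this forces you to do extra work that the paper's formulation sidesteps entirely: since ``hyperbolic'' is not visibly open, you must pass to $\operatorname{int}(\mathcal{H})$ and then recover its density via the $F_\sigma$ decomposition of $\mathcal{H}_{A_0}$ into closed pieces with uniform growth constants, Baire category in $(\text{Aut}_{Leb}(M),\mathcal{W})$, and robustness of uniform hyperbolicity under $L^\infty$ perturbations with uniform rates. That maneuver is sound (it rests on the standard equivalence, for $SL(2,\mathbb R)$ cocycles, between uniform hyperbolicity and uniform norm growth, and on the $\mathcal{W}$-continuity of $f\mapsto A_0\circ f^{\,i}$ into convergence in measure), and it buys you an explicit description of $\mathcal{R}$ and a residuality claim that needs nothing more than the definition of ``residual''; the paper's continuity-point argument is shorter but implicitly relies on the set of continuity points of an upper semicontinuous function being residual for the separate topology, a Baire-type property that is itself delicate for that topology (cf.\ \cite{HW}).
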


\begin{proof}
We consider the function
\begin{equation*}
\begin{array}{cccc}
\Lambda \colon  & \text{Aut}_{Leb}(M)\times L^\infty(M,SL(2,\mathbb R)) & \longrightarrow  & [0,+\infty[ \\
& (f,A) & \mapsto  & \int_M \lambda^+(f,A,x)\,d Leb(x)
\end{array}
\end{equation*}
Proposition A.2 from ~\cite{BF} assures that, for any fixed $A\in L^\infty(M,SL(2,\mathbb R))$, we have that  
the integrated Lyapunov exponent function $\Lambda(\cdot,A)$ is upper semicontinuous with respect to $\mathcal{W}$. Furthermore, by  \cite[Proposition 2.1]{B}, 
fixing $f\in  \text{Aut}_{Leb}(M)$ we have that  $\Lambda(f,\cdot)$ is upper semicontinuous with respect to the $L^\infty$ norm. Thus, $\Lambda$ is an upper semicontinuous map with respect to the separate topology. As a consequence, for every $\epsilon>0$ there exists a neighborhood $U$ of $(f_0,A_0)$ such that 
$$
\Lambda(f,A)\leq \Lambda(f_0,A_0)+\epsilon, \,\,\,\text{for all}\,\,\,(f,A)\in U.
$$
We claim that if $(f_0,A_0)$ is a continuity point of $\Lambda$, then $\Lambda(f_0,A_0)=0$ or else $(f_0,A_0)$ is uniformly hyperbolic. 
Assume, by contradiction, that $\Lambda(f_0,A_0)>0$ and $(f_0,A_0)$ is not uniformly hyperbolic. Then, there exists a small $\vep>0$ so that 
$\Lambda(f,A)\ge \Lambda(f_0,A_0)/2 > 0$ for all $(f,A)\in \text{Aut}_{Leb}(M)\times L^\infty(M,SL(2,\mathbb R))$ such that $f$ is $\vep$-close to $f_0$ (w.r.t. the topology $\mathcal{W}$) and $\|A-A_0\|_{\infty}<\vep$.
Since we do not have the uniform hyperbolicity property, we are in the conditions of ~\cite{B} (observe that we do not need ergodicity cf. ~\cite[\S 5.3]{B}). Hence, there exists $\tilde A_1\in L^\infty(M,SL(2,\mathbb R))$ such that $\|A_1-A_0\|_{\infty}<\vep$ and moreover $\Lambda( f_0,A_1)=0$ which leads to a contradiction and the theorem is proved.
\end{proof}

We observe that the previous result also holds for $\text{Aut}_{Leb}(M)\times C^0(M,SL(2,\mathbb R))$ instead of $\text{Aut}_{Leb}(M)\times L^\infty(M,SL(2,\mathbb R))$ and the $C^0$ topology instead of the $L^\infty$ one (see e.g. \cite{B}).

\section{Trivial spectrum}

Here, we begin by proving that the trivial spectrum is prevalent for the dynamical cocycle of an area-preserving diffeomorphism near any element of  $\text{Homeo}_{Leb}(M)$. Let $\text{Diff}^r_{Leb}(M)$ stands for the set of $C^r$ area-preserving diffeomorphisms on surfaces. A point $p$ is said to be \emph{periodic} of period $n\in\mathbb{N}$ for  $h\in \text{Homeo}_{Leb}(M)$, if $n$ is the smallest positive integer such that $f^n(p)=p$. We say that a periodic point is \emph{persistent} for $h\in \text{Homeo}_{Leb}(M)$ if for every $\epsilon>0$ there is $\delta>0$ such that if $\hat{h}$ is an homeomorphism $\delta$-$C^0$-close to $h$ then $\hat{h}$ has a periodic point $\hat{p}$ of period $n$ which is $\epsilon$-close to $p$.

\begin{maintheorem}\label{teo1b}
For a $C^0$-dense subset $\mathcal{D}\subset \text{Homeo}_{Leb}(M)$ of $C^2$ diffeomorphisms and $\epsilon>0$ we have that, for $f\in \mathcal{D}$, there is a $C^0$-neighborhood $\mathcal{U}_f$ of $f$ and a residual subset $\mathcal{R}\subset \mathcal{U}_f$ such that for all $h\in \mathcal{R}$: 
$$
\int_M \lambda^+(h,Df,x)d{Leb}(x)<\epsilon.
$$ 
\end{maintheorem}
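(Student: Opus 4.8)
The plan is to single out, $C^0$-densely, $C^2$ area-preserving surface diffeomorphisms $f$ whose derivative cocycle $x\mapsto Df(x)$ is \emph{robustly} non-uniformly hyperbolic under perturbations of the base dynamics, and then to quote \cite{BF}. First I would let $\mathcal{D}$ consist of those $f\in\text{Diff}^2_{Leb}(M)$ possessing a persistent periodic point $p$, of some period $n$, for which the return matrix $A_f:=Df(f^{n-1}(p))\circ\cdots\circ Df(f(p))\circ Df(p)\in SL(2,\mathbb{R})$ is elliptic, i.e.\ $\operatorname{tr}(A_f)\in(-2,2)$. The first task is to prove that $\mathcal{D}$ is $C^0$-dense in $\text{Homeo}_{Leb}(M)$. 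Granting this, I would fix $f\in\mathcal{D}$ and choose a $C^0$-neighbourhood $\mathcal{U}_f$ of $f$ in $\text{Homeo}_{Leb}(M)$ so small that, for every $h\in\mathcal{U}_f$, persistence of $p$ gives a period-$n$ point $p_h$ whose $h$-orbit stays close to the $f$-orbit of $p$, and so small that the matrix $Df(h^{n-1}(p_h))\circ\cdots\circ Df(p_h)$ is still elliptic; this is legitimate since $Df$ is continuous ($f$ being $C^2$) and ellipticity is an open condition in $SL(2,\mathbb{R})$. The key point is that for every $h\in\mathcal{U}_f$ the cocycle $(h,Df)$ is \emph{not} uniformly hyperbolic: a uniformly hyperbolic cocycle admits a continuous invariant splitting (cf.\ \cite{KH} and the definition recalled above), which restricted to the periodic orbit of $p_h$ would furnish an invariant line for that elliptic matrix --- impossible.

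To build $\mathcal{D}$ I would start from an arbitrary $h_0\in\text{Homeo}_{Leb}(M)$ and $\delta>0$ and proceed in three moves. (i) Approximate $h_0$ in the $C^0$ topology by a $C^2$ area-preserving diffeomorphism $f_1$ (it is classical that $C^\infty$ conservative diffeomorphisms are $C^0$-dense in $\text{Homeo}_{Leb}(M)$), chosen with $\operatorname{Per}(f_1)$ of zero Lebesgue measure. (ii) By Poincar\'e recurrence pick a recurrent non-periodic point $x$ and a return time $n$ with $f_1^n(x)$ as close to $x$ as desired; since $x$ is non-periodic the points $x,f_1(x),\dots,f_1^{n-1}(x),f_1^n(x)$ are pairwise distinct, and --- here a routine closing-lemma-type general-position argument enters --- one arranges a small topological ball $V$ with $x,f_1^n(x)\in V$ and $V\cap\{f_1(x),\dots,f_1^{n-1}(x)\}=\emptyset$. (iii) Let $\phi_1$ be a $C^0$-small, area-preserving $C^2$ diffeomorphism supported in $V$ that near $f_1^n(x)$ is a translation sending $f_1^n(x)$ to $x$ (realised as a composition of two local shears, so that area is preserved and $D\phi_1(f_1^n(x))=\mathrm{Id}$); then $f_2:=\phi_1\circ f_1$ makes $x$ a periodic point of period exactly $n$ with unchanged orbit and $A_{f_2}=A_{f_1}$. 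Next let $\phi_2$ be a $C^0$-small, area-preserving $C^2$ diffeomorphism supported in a tiny ball centred at $x$, equal near $x$ to a rotation $R_\theta$; here one uses that a rotation supported in a ball of radius $r$ displaces points by at most $2r$, \emph{no matter the angle}, so $\phi_2$ is $C^0$-small for every $\theta$. Then $f:=\phi_2\circ\phi_1\circ f_1$ still has $x$ periodic of period $n$ with the same orbit and $A_f=R_\theta\cdot A_{f_1}$; since $\theta\mapsto\operatorname{tr}(R_\theta A_{f_1})$ is a sinusoid with range symmetric about $0$, choose $\theta$ with $\operatorname{tr}(R_\theta A_{f_1})=0$, so $A_f$ is elliptic. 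Finally an elliptic periodic point of an area-preserving surface diffeomorphism has nonzero fixed-point index ($\operatorname{sign}\det(\mathrm{Id}-A_f)=+1$), hence is persistent; therefore $f\in\mathcal{D}$, and by taking $V$, the recurrence distance and the two bumps small enough, $d_{C^0}(f,h_0)<\delta$.

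To finish I would invoke \cite[Proposition~1.7]{BF} applied with the fixed cocycle $A=Df$, perturbing only the base in $\text{Homeo}_{Leb}(M)$: there is a residual subset $\mathcal{G}\subset\text{Homeo}_{Leb}(M)$ such that for every $h\in\mathcal{G}$ the cocycle $(h,Df)$ either has trivial spectrum or is uniformly hyperbolic. Set $\mathcal{R}:=\mathcal{G}\cap\mathcal{U}_f$, which is residual in the open set $\mathcal{U}_f$. For $h\in\mathcal{R}$ we showed $(h,Df)$ is not uniformly hyperbolic, hence it has trivial spectrum, so $\lambda^+(h,Df,x)=0$ for $Leb$-a.e.\ $x$ and consequently $\int_M\lambda^+(h,Df,x)\,d{Leb}(x)=0<\epsilon$. (If one wishes to avoid relying on the precise $C^0$-form of \cite{BF}, the same conclusion follows from upper semicontinuity of $\Lambda(\cdot,Df)$ on $\mathcal{U}_f$ --- valid for $\mathcal{W}$, hence for $C^0$ since $C^0$-convergence of conservative homeomorphisms implies weak convergence, by \cite[Prop.~A.2]{BF}: its continuity points are residual in $\mathcal{U}_f$, and at such a point, $(h,Df)$ not being uniformly hyperbolic, the base-perturbation mechanism of \cite{BF} forces $\Lambda(h,Df)=0$.)

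The step I expect to be the main obstacle is move (iii): carrying out a single $C^0$-small, area-preserving $C^2$ perturbation that simultaneously closes up the recurrent orbit segment into an exact periodic orbit \emph{and} twists the linearised return map into the elliptic regime, while keeping the closing-up modification (near $f_1^{n-1}(x)$) and the rotation insertion (near $x$) from interfering with each other or with the rest of the orbit --- which forces the bookkeeping with disjoint supports and the preliminary general-position adjustment. The two facts that make it work are elementary: local shears and rotations supported in small balls are $C^0$-small irrespective of their internal amplitude, and every element of $SL(2,\mathbb{R})$ is turned elliptic by left multiplication by a suitable rotation. A secondary technical point is the transfer of the genericity/semicontinuity results of \cite{BF} from the weak topology on $\text{Aut}_{Leb}(M)$ to the $C^0$ topology on $\text{Homeo}_{Leb}(M)$.
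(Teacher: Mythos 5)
Your construction of $\mathcal{D}$ (closing a recurrent orbit and inserting a compactly supported rotation to make the return matrix elliptic) is a reasonable elementary substitute for the paper's route through Newhouse's theorem \cite{N}, and your observation that a persistent elliptic periodic point rules out uniform hyperbolicity of $(h,Df)$ throughout a $C^0$-neighborhood is correct. The problem is the concluding step. Proposition 1.7 of \cite{BF} (as used in Theorem~\ref{teo1} above) is a statement about a residual subset of the \emph{product} $\text{Aut}_{Leb}(M)\times L^\infty(M,SL(2,\mathbb R))$, and the perturbation that kills the exponent in its proof is a perturbation of the \emph{cocycle} (Bochi's theorem), not of the base. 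You cannot ``apply it with the fixed cocycle $A=Df$'': a residual subset of a product need not meet a prescribed slice $\text{Homeo}_{Leb}(M)\times\{Df\}$ in a residual subset of that slice, and residuality for the weak topology on $\text{Aut}_{Leb}(M)$ does not transfer to the $C^0$ topology on the subspace $\text{Homeo}_{Leb}(M)$. The results that actually perturb only the base for a fixed continuous $A$ are Theorem 4 and Corollary 5 of \cite{BF} --- which is what the paper invokes --- and they deliver only $\Lambda<\epsilon$ on a residual subset of an $\epsilon$-dependent neighborhood, not trivial spectrum; the $\epsilon$ in the statement of Theorem~\ref{teo1b} is there precisely because no fixed-cocycle dichotomy ``trivial or hyperbolic'' is available. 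Your claim of exact triviality of the spectrum should have been a warning sign.

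There is also a missing hypothesis. The base-perturbation mechanism of \cite[Theorem 4, Corollary 5]{BF} requires, besides the persistent elliptic periodic point, that the cocycle genuinely \emph{vary} near the orbit: for the derivative cocycle this is the condition that $D(Df_{f^i(p)})\neq 0$ for some $i$, which the paper arranges explicitly (``at least one periodic orbit has non-zero second derivative at some point''). If $Df$ is locally constant along the orbit --- and your surgery is prone to produce exactly this, since the inserted shears and rotations are affine near the relevant points --- then a $C^0$-small displacement of the base does not change the matrices read off along nearby orbits, and there is no way to spread the elliptic behavior and lower the exponent. Your fallback argument via continuity points of $\Lambda(\cdot,Df)$ suffers from the same omission, since the ``base-perturbation mechanism'' it appeals to is again Theorem 4 of \cite{BF} with this hypothesis. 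To repair the proof you should (i) add the non-degeneracy of $D(Df)$ at some point of the constructed periodic orbit to the definition of $\mathcal{D}$ and verify it survives your perturbations, and (ii) replace the appeal to Proposition 1.7 by Corollary 5 of \cite{BF}, accepting the conclusion $\int_M\lambda^+(h,Df,x)\,d{Leb}(x)<\epsilon$ rather than $=0$.
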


\begin{proof}
Using ~\cite{N} we know that $C^1$-close to any element in $\text{Diff}^1_{Leb}(M)$ there is an Anosov map or else a map that exhibits dense elliptic periodic orbits. By (\cite{Z}) $\text{Diff}^r_{Leb}(M)$ ($r\geq 2$) is $C^1$-dense in  $\text{Diff}^1_{Leb}(M)$ and by (\cite{Oh}) $\text{Diff}^1_{Leb}(M)$ is $C^0$-dense in  $\text{Homeo}_{Leb}(M)$. 

By performing an arbitrarily small $C^0$-perturbation of an Anosov in  $\text{Diff}^1_{Leb}(M)$ we can obtain an element in $\text{Diff}^1_{Leb}(M)$ $C^1$-far from the $C^1$-open subset of Anosov maps, thus having an element in $\text{Diff}^1_{Leb}(M)$ exhibiting dense elliptic periodic orbits. Therefore, we obtain a $C^0$-dense subset $\mathcal{D}$ of $\text{Homeo}_{Leb}(M)$ with dense elliptic periodic orbits and of class $C^2$. Moreover,  $\mathcal{D}$ can be chosen such that at least one of its (dense) periodic orbits has non-zero second derivative at some point of its orbit. 

In conclusion, we have $f\in \mathcal{D}$ with the dynamical cocycle $Df\colon M\rightarrow SL(2,\mathbb{R})$ of class $C^1$ and, moreover, $f$ has a persistent elliptic periodic point $p = f^n(p)$ such that $D(Df_{f^i(p)})$ is non-zero, for some $i\in\{0,1,...,n-1\}$. 
In fact, the periodic point is persistent since it is an isolated fixed point, for the return map, and thus the Poincar\'e-Lefschetz index is different from 1.

We are in the conditions of ~\cite[Corollary 5]{BF} (see also ~\cite[Theorem 4]{BF}) and so we obtain a neighborhood  $\mathcal{U}_f\subset \text{Homeo}_{Leb}(M)$ of $f$ and a residual subset $\mathcal{R}\subset \mathcal{U}_f$ such that 
$$\int_M \lambda^+(h,Df,x)d{Leb}(x)<\epsilon$$ for all $h\in \mathcal{R}$ and the theorem is proved.

\end{proof}

Now, we also obtain prevalence of trivial spectrum if we consider the $L^p$-measurable $SL(2,\mathbb R)$ cocycles endowed with the $L^p$-norm (cf. ~\cite{AB}). The next result is a direct consequence of Arbieto and Bochi theorem ~\cite{AB}.

\begin{corollary}\label{teo1c}
There exists a residual subset $\mathcal{R}\subset \text{Aut}_{Leb}(M)\times L^p(M,SL(2,\mathbb R))$ such that any element $(f,A)\in\mathcal{R}$ has trivial spectrum.
\end{corollary}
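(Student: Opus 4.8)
The plan is to deduce Corollary~\ref{teo1c} directly from the Arbieto--Bochi theorem~\cite{AB}, mirroring the structure of the proof of Theorem~\ref{teo1} but without the uniformly hyperbolic alternative. First I would recall the statement of~\cite{AB}: for $L^p$-cocycles (with $p\ge 1$) over a fixed volume-preserving base, the integrated Lyapunov exponent function $A\mapsto \Lambda(f,A)$ is continuous at $A$ if and only if either the cocycle is uniformly hyperbolic or the Lyapunov exponents vanish almost everywhere; moreover, in the $L^p$ topology (as opposed to $L^\infty$ or $C^0$), uniform hyperbolicity is \emph{not} an open property, and in fact uniformly hyperbolic $L^p$-cocycles can be approximated in $L^p$ by cocycles with zero exponents. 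This last point is the crucial difference from the $L^\infty$ setting: in $L^p$ there is no obstruction coming from the open set of hyperbolic cocycles.

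Next I would set up the semicontinuity framework as in the proof of Theorem~\ref{teo1}. Define
\begin{equation*}
\begin{array}{cccc}
\Lambda \colon  & \text{Aut}_{Leb}(M)\times L^p(M,SL(2,\mathbb R)) & \longrightarrow  & [0,+\infty[ \\
& (f,A) & \mapsto  & \int_M \lambda^+(f,A,x)\,d Leb(x).
\end{array}
\end{equation*}
For fixed $A$, upper semicontinuity of $\Lambda(\cdot,A)$ in the weak topology $\mathcal{W}$ follows from~\cite[Proposition A.2]{BF} exactly as before (an $L^p$-cocycle is in particular $L^1$, which is all that argument needs). For fixed $f$, upper semicontinuity of $\Lambda(f,\cdot)$ in the $L^p$-norm is part of~\cite{AB}. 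Hence $\Lambda$ is upper semicontinuous for the separate topology on the product, and by the Baire-type argument for separately upper semicontinuous functions (the same one invoked in Theorem~\ref{teo1}, relying on~\cite{HW}), the set $\mathcal{R}$ of continuity points of $\Lambda$ is residual in $\text{Aut}_{Leb}(M)\times L^p(M,SL(2,\mathbb R))$.

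It then remains to show that at every $(f_0,A_0)\in\mathcal{R}$ one has $\Lambda(f_0,A_0)=0$, i.e. trivial spectrum. Suppose not. If $(f_0,A_0)$ is not uniformly hyperbolic, then by~\cite{AB} applied with the base fixed at $f_0$, there are cocycles $A_1$ arbitrarily $L^p$-close to $A_0$ with $\Lambda(f_0,A_1)=0$, contradicting continuity of $\Lambda$ at $(f_0,A_0)$. If instead $(f_0,A_0)$ is uniformly hyperbolic, then—and here is where the $L^p$ setting is essential—uniform hyperbolicity being non-open in $L^p$, the same perturbation result of~\cite{AB} still produces $A_1$ with $\|A_1-A_0\|_p$ small and $\Lambda(f_0,A_1)=0$, again contradicting continuity. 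Either way $\Lambda(f_0,A_0)=0$, so every element of $\mathcal{R}$ has trivial spectrum. The main point to get right—and the only real obstacle—is the precise citation of~\cite{AB}: one must confirm that their approximation statement (exponents can be made to vanish by an $L^p$-small perturbation of the fiber) holds with no uniform-hyperbolicity exception and requires no ergodicity hypothesis, just as in the remark following~\cite[\S 5.3]{B}; everything else is the routine semicontinuity-plus-Baire packaging already carried out for Theorem~\ref{teo1}.
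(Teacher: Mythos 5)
Your proposal is correct and follows the same skeleton as the paper's proof: separate upper semicontinuity of $\Lambda$ (via \cite[Proposition A.2]{BF} in the base variable and \cite{AB} in the fiber variable), residuality of the set of continuity points for the separate topology, and a perturbative contradiction at any continuity point with positive exponent. The one genuine difference is in the final step. You keep the base $f_0$ fixed and invoke the Arbieto--Bochi theorem directly to produce $A_1$ with $\|A_1-A_0\|_{L^p}$ small and $\Lambda(f_0,A_1)=0$, arguing (correctly) that in the $L^p$ topology there is no uniformly hyperbolic exception and that no ergodicity hypothesis is needed. The paper instead first perturbs $f_0$ in the weak topology to an ergodic automorphism $f_1$ using Halmos's theorem, and only then perturbs the cocycle over $f_1$ to kill the exponents, landing the pair $(f_1,A_1)$ inside the neighborhood $U$ to reach the contradiction. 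Your route is slightly more economical (one perturbation instead of two) and rests entirely on the hypothesis you explicitly flag: that the Arbieto--Bochi density of one-point-spectrum cocycles holds over an arbitrary, not necessarily ergodic, automorphism of the Lebesgue space. That is indeed how their theorem is stated, so your shortcut is legitimate; the paper's detour through ergodicity is a way of avoiding having to check that hypothesis, at the cost of an extra appeal to \cite{H}. In either version one should also note, as you implicitly do, that for $SL(2,\mathbb R)$-valued cocycles ``one-point spectrum'' forces $\lambda^+=0$ almost everywhere, so density of such cocycles really does contradict the lower bound $\Lambda\ge\Lambda(f_0,A_0)/2$ on the neighborhood.
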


\begin{proof}
The strategy is similar to the proof of Theorem~\ref{teo1}. 
We consider the function endowed with the separate topology, $\mathcal{W}\times L^p$-norm, in its domain:
\begin{equation*}
\begin{array}{cccc}
\Lambda \colon  & \text{Aut}_{Leb}(M)\times L^p(M,SL(2,\mathbb R)) & \longrightarrow  & [0,+\infty[ \\
& (f,A) & \mapsto  & \int_M \lambda^+(f,A,x) \, dLeb(x)
\end{array}
\end{equation*}
By ~\cite[Proposition A.2]{BF} we know that, for any fixed $A\in L^p(M,SL(2,\mathbb R))$, we have that  
the integrated Lyapunov exponent function $\Lambda(\cdot,A)$ is upper semicontinuous with respect to. $\mathcal{W}$. Moreover, by ~\cite[Theorem 2]{AB}, 
for any fixed $f\in  \text{Aut}_{Leb}(M)$ we have that  $\Lambda(f,\cdot)$ is upper semicontinuous with respect to the $L^p$ norm. Hence, since we consider the separate topology it is easy to show that $\Lambda$ is an upper semicontinuous map. Thus, for every $\epsilon>0$ there exists a neighborhood $U$ of $(f_0,A_0)$ such that 
$$
\Lambda(f,A)\leq \Lambda(f_0,A_0)+\epsilon
$$ for all $(f,A)\in U$.
Now we claim that if $(f_0,A_0)$ is a continuity point of $\Lambda$ then $\Lambda(f_0,A_0)=0$. 
Assume, by contradiction, that $\Lambda(f_0,A_0)>0$. Then, there exists a small $\vep>0$ so that 
$\Lambda(f,A)\ge \Lambda(f_0,A_0)/2 > 0$ for all $(f,A)\in \text{Aut}_{Leb}(M)\times L^p(M,SL(2,\mathbb R))$ such that $f$ is $\vep$-close to $f_0$ (with respect to. the topology $\mathcal{W}$) and $\|A-A_0\|_{L^{p}}<\vep$.

However, we can perturb $f_0$ obtaining an ergodic $f_1$ (cf. ~\cite{H}), and, since we are in the conditions of ~\cite[Theorem 1]{BF}, there exists $\tilde A_1\in L^p(M,SL(2,\mathbb R))$ that is $\vep$-$L^p$-close to $A_0$ and such that $\Lambda(\tilde f_1,A_1)=0$ which leads to a contradiction. This proves our claim.
Finally, since the set of continuity points of a semicontinuous function if a residual set the corollary is now proved.

\end{proof}

\section*{Acknowledgements} 
 
MB was partially supported by National Funds through FCT - ``Funda\c{c}\~{a}o para a Ci\^{e}ncia e a Tecnologia", project PEst-OE/MAT/UI0212/2011 and also the project PTDC/MAT/099493/2008. PV was partially supported by
CNPq and FAPESB. The authors would like to thanks Jairo Bochi for suggestions that improved the presentation of the paper.


\end{document}